\title[Abel--Jacobi for compact-type models]%
  {A rigid analytic proof that the Abel--Jacobi map extends to compact-type models}
\author{Taylor Dupuy and Joseph Rabinoff}
\begin{document}

\begin{abstract}
  Let $K$ be a non-Archimedean valued field with valuation ring $R$.  Let $C_\eta$ be a $K$-curve with compact type reduction, so its Jacobian $J_\eta$ extends to an abelian $R$-scheme $J$.  We prove that an Abel--Jacobi map $\iota\colon C_\eta\to J_\eta$ extends to a morphism $C\to J$, where $C$ is a compact-type $R$-model of $J$, and we show this is a closed immersion when the special fiber of $C$ has no rational components.  To do so, we apply a rigid-analytic ``fiberwise'' criterion for a finite morphism to extend to integral models, and geometric results of Bosch and L\"utkebohmert on the analytic structure of $J_\eta$.
\end{abstract}

\maketitle

\section{Introduction}
Let $K$ be a field equipped with a nontrivial, non-Archimedean valuation $\val\colon K\to\R\cup\{\infty\}$.  Let $R$ be the (potentially non-noetherian) valuation ring of $K$, and let $k$ be its residue field.  We assume for simplicity that $k$ is algebraically closed.  Let $C\to\Spec(R)$ be a proper, flat relative curve, with generic fiber $C_\eta$ and special fiber $C_s$.  We suppose that $C$ is smooth and geometrically connected, and that $C_s$ is of \defi{compact type}, which means that it is reduced with (at worst) nodal singularities, and removing any node disconnects $C_s$.  Let $\{C_v\}_v$ denote the irreducible components of $C_s$ (the subscript $v$ is meant to denote a vertex in the incidence graph of $C_s$).

Let $J = \Pic^0_{C/R}$ be the identity component of the relative Picard scheme of $C$; this is smooth and separated over $R$ by~\cite[Theorem~9.4/1]{bosch_lutkeboh_raynaud90:neron_models}.  Its generic fiber is the Jacobian of $C_\eta$.  Let $J(C_v)$ be the Jacobian of $C_v$.  By~\cite[9.2, Example~8]{bosch_lutkeboh_raynaud90:neron_models}, we have an isomorphism on special fibers
\[ J_s = \Pic^0_{C_s/k} \overset\sim\To \prod_v J(C_v), \]
which is functorially induced by the inclusions $C_v\inject C_s$.  Since $J_s$ is an abelian variety, it follows from~\cite[Corollaire~5.5.2]{egaIII_1} that $J\to\Spec(R)$ is proper (the generic fiber $J_\eta$ is connected, so the $Z'$ in the statement of that result must be empty).  Hence $J$ is an abelian scheme.  It follows that when $R$ is noetherian, then $J$ is the N\'eron model of its generic fiber $J_\eta$.

Suppose that $C(K)\neq\emptyset$.  Let $\iota_\eta\colon C_\eta\to J_\eta$ be an Abel--Jacobi map defined by some choice of base point.  The goal of this note is to prove the following theorem.

\begin{thm}\label{thm:extend.abel.jacobi}
  The Abel--Jacobi map $\iota_\eta\colon C_\eta\to J_\eta$ extends, necessarily uniquely, to a morphism $\iota\colon C\to J$.  This morphism is a closed immersion if $C_s$ has no rational components.
\end{thm}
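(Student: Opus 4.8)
The uniqueness is automatic: $C$ is $R$-flat, so $C_\eta$ is schematically dense in $C$, and $J$ is separated over $R$; hence any two extensions of $\iota_\eta$ coincide. We may assume $g=\dim J_\eta\ge 1$, as otherwise $J=0$ and there is nothing to prove, and then $\iota_\eta$ is a closed immersion, in particular finite. To produce the extension I would analytify and invoke the fiberwise criterion: the finite morphism $\iota_\eta^{\mathrm{an}}\colon C_\eta^{\mathrm{an}}\to J_\eta^{\mathrm{an}}$ extends to a morphism of the formal $R$-models underlying $C$ and $J$ precisely when it respects specialization, i.e.\ when $\iota_\eta^{\mathrm{an}}$ carries each formal fiber of $\mathrm{sp}_C\colon C_\eta^{\mathrm{an}}\to C_s$ into a single formal fiber of $\mathrm{sp}_J\colon J_\eta^{\mathrm{an}}\to J_s$. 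Equivalently, the composite $\mathrm{sp}_J\circ\iota_\eta^{\mathrm{an}}$ must factor through $\mathrm{sp}_C$.

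The content is to verify this specialization condition, and here I would feed in the Bosch--L\"utkebohmert description of the analytic structure of $J_\eta$. Since $J$ is an abelian $R$-scheme, the formal fibers of $\mathrm{sp}_J$ are the residue polydisks of $J$, the reduction map $J_\eta(K)=J(R)\to J_s(k)$ is a group homomorphism, and its kernel is the residue disk of the identity. Thus it suffices to show that for any two points $P,P'$ lying in a common formal fiber of $C_\eta^{\mathrm{an}}$, the difference $\iota_\eta(P)-\iota_\eta(P')=[P-P']$ reduces to $0$ in $J_s=\prod_v J(C_v)$. When the common reduction $\bar x$ is a smooth point of a component $C_v$, the points $P$ and $P'$ meet the special fiber on the same component, so the class $[P-P']$ specializes to a multidegree-$0$ class, namely $\mathcal O_{C_s}(\bar x-\bar x)=0$; hence $\iota_\eta(P)$ and $\iota_\eta(P')$ have the same image under $\mathrm{sp}_J$.

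The main obstacle is the behavior over a node $\bar x\in C_v\cap C_w$, where the formal fiber is an open annulus rather than a disk. Here I expect to argue, again from the Bosch--L\"utkebohmert structure theory, that because $C_s$ is of compact type its dual graph is a tree and $J_\eta$ therefore has no toric part: the annulus carries no winding around a torus, so its image cannot spread across distinct residue disks and $[P-P']$ still specializes to $0$. This is precisely the point at which the compact-type hypothesis is used; for a cycle in the dual graph the corresponding annulus would wind around the toric part, as for a Tate curve, and the criterion would fail. Granting the condition at smooth points and nodes, $\mathrm{sp}_J\circ\iota_\eta^{\mathrm{an}}$ factors through $\mathrm{sp}_C$, and the fiberwise criterion yields the desired $\iota\colon C\to J$.

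For the last assertion I would invoke the fiberwise criterion for closed immersions: since $\iota$ is proper and $C$ is $R$-flat, $\iota$ is a closed immersion as soon as both $\iota_\eta$ and the reduction $\iota_s$ are. The generic fiber is the Abel--Jacobi embedding of a smooth curve of genus $\sum_v g(C_v)\ge 1$ (the sum runs over all components because the dual graph is a tree), hence a closed immersion. On the special fiber $\iota_s\colon C_s\to\prod_v J(C_v)$ restricts on each $C_v$ to its own Abel--Jacobi map into the $v$-th factor, the remaining coordinates being constant since they arise from divisors supported at the finitely many nodes; this restriction is a closed immersion exactly when $g(C_v)\ge 1$, i.e.\ when $C_v$ is not rational. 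The no-rational-components hypothesis thus makes each component embed, and a rational component---whose factor $J(C_v)$ vanishes and which is therefore contracted---is the only possible obstruction. The remaining verification, that the glued map $\iota_s$ is injective across components and unramified at the nodes, is then routine, completing the proof.
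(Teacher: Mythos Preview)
Your outline matches the paper's argument closely: pass to the formal-analytic category, invoke the fiberwise extension criterion for finite morphisms, and for the closed-immersion statement reduce to checking that $\iota_s$ is a closed immersion via the description of $\iota_s|_{C_w}$ as an Abel--Jacobi map into the $w$-factor and constant in the others. Two places need sharpening.

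The annulus case is, as you say, the crux, but your ``no winding because there is no toric part'' heuristic is not yet a proof. The paper does not argue this directly; it cites Bosch--L\"utkebohmert's Propositions~5.11 and~5.13 (consequences of their Homotopy Theorem), which assert that any analytic morphism from a disc or an annulus into $J_\eta^{\mathrm{an}}$, for $J$ a Jacobian with good reduction, has image contained in a single formal fiber. This handles the smooth and nodal formal fibers of $C$ uniformly, so your separate pointwise computation over smooth points, while correct, is subsumed.

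Second, labeling the global injectivity of $\iota_s$ and its unramifiedness at the nodes ``routine'' undersells what remains. The paper uses the tree structure explicitly: to separate $C_w$ from a point lying on some $C_{w'}$ with $w'\neq w$, one tracks the $w_1$-coordinate along the unique path from $w$ to $w'$ in the dual graph and shows it is nonzero; and at a node on $C_v\cap C_w$ the two tangent directions land in the $J(C_w)$- and $J(C_v)$-factors respectively, hence remain linearly independent. Without these checks, the claim that distinct components do not collide in $\prod_v J(C_v)$ is unsupported.
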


This result is known to experts using moduli-theoretic techniques, at least when $R$ is noetherian: see for instance,~\cite{caporaso07:abel_maps,caporaso_harris98:plane_curves,caporaso_esteves07:abel_maps_stable_curves}. (When $R$ is not noetherian, then there is no good notion of regularity for $C$, so existing results are not immediately applicable.)
We will prove Theorem~\ref{thm:extend.abel.jacobi} by passing to a formal-analytic category and using Bosch--L\"utkebohmert's careful analysis of reductions of curves and Jacobians.  More specifically, we show in Proposition~\ref{prop:analytic.extension.criterion} that a morphism $f$ between the generic fibers of formal analytic varieties extends to a morphism of integral models if and only if $f$ takes formal fibers into formal fibers.  We then use results of Bosch--L\"utkebohmert to the effect that every formal fiber of a semistable curve is an open disc or an open annulus, and that every disc or annulus in the analytification of a Jacobian with good reduction is contained in a single formal fiber (a consequence of their so-called ``Homotopy Theorem''~\cite[Theorem~3.5]{bosch_lutkeboh84:stable_reduction_II}).  An advantage of working in this context is that our proof avoids noetherian hypotheses.

\begin{rem*}
  When $R$ is noetherian, then the N\'eron mapping property implies that $C_\eta\to J_\eta$ extends to the smooth locus of $C$.  Much of the content of Theorem~\ref{thm:extend.abel.jacobi} is that this morphism extends to the nodes.
\end{rem*}

\subsection{Acknowledgments}
The authors are very grateful to Jesse Kass for many helpful comments on an early draft, and to Lucia Caporaso for her expertise.

Dupuy was supported by the European Research Council under the European Union's Seventh Framework Programme (FP7/2007-2013) / ERC Grant agreement no.\ 291111 / MODAG. Rabinoff was supported by NSF DMS-1601842.  

\subsection{Reduction step}
Our analytic criteria are geometric in nature, so first we reduce to the case when the ground field is complete and algebraically closed.  

\begin{lem}\label{lem:extend.after.basechange}
  Let $K'$ be a valued field extension of $K$, and let $R'$ be the ring of integers of $K'$. Let $X,Y$ be flat $R$-schemes, and let $X' = X\tensor_R R'$ and $Y' = Y\tensor_R R'$.  Let $f_\eta\colon X_\eta\to Y_\eta$ be a morphism on generic fibers.  Then $f_\eta$ extends, necessarily uniquely, to a morphism $f\colon X\to Y$, if and only if the base change $f'_\eta\colon X'_\eta\to Y'_\eta$ extends to a morphism $f'\colon X'\to Y'$.  If $f'$ is a closed immersion, then so is $f$.
\end{lem}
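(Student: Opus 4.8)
The plan is to deduce everything from faithfully flat descent along $\Spec(R')\to\Spec(R)$. First I would check that $R\to R'$ is faithfully flat: since $K'/K$ is a valued field extension, the valuation of $K'$ restricts to that of $K$, so $R\to R'$ is an injective local homomorphism; moreover $R'\subset K'$ is torsion-free over the valuation ring $R$, hence flat, and a flat local homomorphism is automatically faithfully flat. Consequently $\Spec(R')\to\Spec(R)$ is fpqc, and so are the base changes $X'\to X$ and $Y'\to Y$ (note $X'=X\times_Y Y'$ and $Y'=Y\times_{\Spec(R)}\Spec(R')$). The forward implication is then immediate: if $f$ extends $f_\eta$, then $f'=f\tensor_R R'$ extends $f'_\eta$.

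For the reverse implication I would use the scheme-theoretic closure of the graph. Flatness of $X$ over the domain $R$ makes $X_\eta$ schematically dense in $X$, which, together with separatedness of $Y$ (as holds for $Y=J$ in our application), yields the asserted uniqueness. Let $Z\hookrightarrow X\times_R Y$ be the scheme-theoretic closure of the graph $\Gamma_{f_\eta}\subset X_\eta\times_K Y_\eta$. Because the graph of a morphism to a separated scheme is closed and $X_\eta$ is schematically dense, one checks that $f_\eta$ extends to a morphism $X\to Y$ if and only if the projection $\pi\colon Z\to X$ is an isomorphism: an extension $f$ has closed graph equal to $Z$, and conversely $\pi^{-1}$ composed with the second projection $Z\to Y$ supplies the extension.

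Next I would base change along the flat map $R\to R'$. Scheme-theoretic image commutes with flat base change for quasi-compact, quasi-separated morphisms, and $X\times_R Y$ is quasi-compact and quasi-separated in our situation since $X=C$ and $Y=J$ are proper; hence $Z\tensor_R R'$ is the scheme-theoretic closure of $\Gamma_{f'_\eta}$, that is, $Z'=Z\tensor_R R'$ and $\pi'=\pi\times_X X'$. Since being an isomorphism is fpqc-local on the base and $X'\to X$ is fpqc, $\pi$ is an isomorphism if and only if $\pi'$ is. Applying the graph criterion over both $R$ and $R'$ then gives the desired equivalence.

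Finally, for the closed-immersion claim I would invoke descent of closed immersions: writing $f'=f\times_Y Y'$ with $Y'\to Y$ fpqc, and using that the property of being a closed immersion is fpqc-local on the base, the hypothesis that $f'$ is a closed immersion forces $f$ to be one as well. The main obstacle I anticipate is the base-change compatibility of the scheme-theoretic closure in the possibly non-noetherian setting: one must verify that the relevant immersion is quasi-compact so that formation of the scheme-theoretic image genuinely commutes with the flat base change $R\to R'$, and it is precisely here that the properness of $X$ and $Y$ enters.
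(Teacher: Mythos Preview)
Your argument is correct for the intended application, but it takes a different route from the paper and imports hypotheses (separatedness of $Y$, quasi-compactness and quasi-separatedness of $X$ and $Y$) that the lemma as stated does not assume. You are right that these hold when $X=C$ and $Y=J$, and you are also right that the delicate point in your approach is the compatibility of scheme-theoretic closure with the flat base change $R\to R'$ in the non-noetherian setting.

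The paper bypasses the graph-closure construction entirely and argues by direct faithfully flat descent of morphisms. Given $f'\colon X'\to Y'$ extending $f'_\eta$, one composes with $Y'\to Y$ to obtain a morphism $X'\to Y$, and then checks the descent datum: the two compositions $X'\times_X X'\rightrightarrows X'\to Y$ agree. They agree on the generic fiber $(X'\times_X X')\tensor_R K$ because $f'_\eta=f_\eta\tensor_K K'$ factors through $X_\eta$; since $X'\times_X X'$ is $R'$-flat its generic fiber is schematically dense, so the two maps to $Y$ coincide (here one still implicitly uses that $Y$ is separated, as does your uniqueness claim). Descent then produces $f\colon X\to Y$. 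This avoids any appeal to scheme-theoretic images and the attendant qcqs verification, so it proves the lemma in the generality stated rather than only for proper $X,Y$. Your approach has the virtue of packaging the obstruction to extension as a single geometric object $Z$, which can be useful elsewhere, but for this lemma the paper's argument is shorter and carries fewer hypotheses.

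A minor slip: you wrote $X'=X\times_Y Y'$ where you meant $X'=X\times_{\Spec R}\Spec R'$.
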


\begin{proof}
  Since $X$ is $R$-flat, $X_\eta$ is dense in $X$, so if $f$ exists then it is unique.  Suppose that there exists $f'\colon X'\to Y'$ extending $f_\eta'$. The two compositions $X'\times_X X'\rightrightarrows Y'\to Y$ coincide when restricted to $(X'\times_X X')\tensor_{R}K$ because $f'_\eta = f_\eta\tensor_K K'$, so since $X'\times_X X'$ is $R'$-flat, the maps $X'\times_X X'\rightrightarrows Y$ coincide.  Thus $f$ exists by faithfully flat descent of morphisms.  The property of being a closed immersion can also be checked after faithfully flat base change.
\end{proof}

Replacing $K$ by the completion of an algebraic closure of the completion of $K$, we may and do assume from now on that $K$ is complete and algebraically closed.

\section{A fiberwise criterion for a finite morphism to extend to models}

Let $X$ be a proper, flat $R$-scheme.  The valuative criterion of properness provides us with a functorial reduction map $\pi\colon X(K)\to X(k)$.  A fiber of $\pi$ is called a \defi{formal fiber}.  By functoriality, if $f\colon X\to Y$ is a morphism of proper, flat $R$-schemes, then $f_\eta$ takes formal fibers of $X$ into formal fibers of $Y$.  In this section, we prove a partial converse statement: that if $f_\eta\colon X_\eta\to Y_\eta$ is a finite morphism that takes formal fibers of $X$ to formal fibers of $Y$, then $f_\eta$ extends to a morphism $f\colon X\to Y$, assuming the special fibers of $X$ and $Y$ are reduced.

\subsection{Admissible formal schemes}
In the situation discussed above, we wish to work more generally with admissible formal $R$-schemes.

Fix $\varpi\in K$ with $0 < \val(\varpi) < \infty$.  Let $X$ be a proper, flat $R$-scheme of finite presentation.  Then the $\varpi$-adic completion $\hat X$ of $X$ is a proper admissible formal $R$-scheme in the sense of~\cite{bosch_lutkeboh93:formal_rigid_geometry_I}. That is, $X$ is covered by formal affines $\Spf(A)$ for $A$ a flat, $\varpi$-adically complete $R$-algebra of topological finite presentation.  The admissible formal $R$-scheme $X$ has a ``generic fiber'' $X_\eta$, which is a rigid analytic variety over $K$.  The generic fiber is locally given by $\Spf(A)_\eta = \Sp(A\tensor_{R}K)$, which makes sense as $A_{K}:=A\tensor_{R}K$ is a quotient of a Tate algebra over $K$.  When $X\to\Spec(R)$ is proper, the analytic generic fiber $\hat X_\eta$ is canonically isomorphic to $X_\eta^\an$, the analytification of the generic fiber of $X$.  See~\cite[Theorem~A.3.1]{conrad99:irred_comps}.

\begin{lem}\label{lem:extend.after.completion}
  Let $X$ and $Y$ be proper, flat $R$-schemes of finite presentation, and let
  $f_\eta\colon X_\eta\to Y_\eta$ be a morphism on generic fibers.  Then
  $f_\eta$ extends to a morphism $f\colon X\to Y$ if and only if
  $f_\eta^\an\colon X_\eta^\an\to Y_\eta^\an$ extends to a morphism
  $\hat f\colon\hat X\to\hat Y$.  More precisely, $f_\eta$ extends if and only if there exists $\hat f\colon \hat X\to\hat Y$ such that $f_\eta^\an$ is the generic fiber of $\hat f$ under the identification $\hat X_\eta\cong X_\eta^\an$.
\end{lem}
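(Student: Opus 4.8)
The plan is to treat the two implications separately, with essentially all of the work in the ``if'' direction. The ``only if'' direction is formal: if $f\colon X\to Y$ extends $f_\eta$, then its $\varpi$-adic completion $\hat f\colon\hat X\to\hat Y$ is a morphism of admissible formal $R$-schemes, and its generic fiber is $f_\eta^\an$ under the identifications $\hat X_\eta\cong X_\eta^\an$ and $\hat Y_\eta\cong Y_\eta^\an$, since analytification and completion are compatible with the comparison of~\cite[Theorem~A.3.1]{conrad99:irred_comps}. In either setting the extension, if it exists, is unique, exactly as in Lemma~\ref{lem:extend.after.basechange}: $X$ is $R$-flat, so $X_\eta$ is schematically dense in $X$, and $Y$ is separated.

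For the ``if'' direction I would reduce to an algebraization problem by passing to graphs. Giving a morphism $X\to Y$ over $R$ is the same as giving a closed subscheme $\Gamma\subset X\times_R Y$ (the graph, which is closed because $Y$ is separated over $R$) for which the first projection $p\colon\Gamma\to X$ is an isomorphism, and likewise in the formal and rigid-analytic categories. Starting from $\hat f$, I would form its graph $\Gamma_{\hat f}$, a closed formal subscheme of $\widehat{X\times_R Y}$ that is proper over $R$. The heart of the proof is then to \emph{algebraize} $\Gamma_{\hat f}$: since $X\times_R Y$ is proper, flat, and of finite presentation over $R$, a formal GAGA (Grothendieck existence) theorem should produce a unique closed subscheme $\Gamma\subset X\times_R Y$ whose $\varpi$-adic completion is $\Gamma_{\hat f}$. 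This is the step I expect to be the main obstacle, and the only place properness is truly used; care is needed because $R$ is not noetherian, so one must either invoke a non-noetherian form of formal GAGA or first descend $X$, $Y$, and $\Gamma_{\hat f}$ to a noetherian subring of $R$ by a finite-presentation limit argument and algebraize there.

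Granting the algebraization, it remains to check that $p\colon\Gamma\to X$ is an isomorphism, so that $\Gamma$ is the graph of the desired $f\colon X\to Y$. By construction $\hat p$ is an isomorphism, and on generic fibers $\Gamma_\eta^\an=\Gamma_{f_\eta^\an}$ is the graph of $f_\eta^\an$, so by rigid-analytic GAGA~\cite{conrad99:irred_comps} the closed subscheme $\Gamma_\eta\subset X_\eta\times_K Y_\eta$ is the algebraic graph $\Gamma_{f_\eta}$ and $p_\eta$ is an isomorphism. Since $\Spec(R)=\{\eta,s\}$, the fibers of $p$ are exactly those of $p_\eta$ and $p_s$, both isomorphisms, so $p$ is quasi-finite; being also proper, it is finite, and $\Gamma=\Spec_X(\mathcal A)$ with $\mathcal A=p_*\mathcal O_\Gamma$ coherent. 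The kernel of $\mathcal O_X\to\mathcal A$ is $\varpi$-power torsion (as $p_\eta$ is an isomorphism and $K=R[\varpi^{-1}]$, because $R$ has rank one), hence zero since $\mathcal O_X$ is $R$-flat; the cokernel $\mathcal Q$ is coherent and $\varpi$-power torsion for the same reason, and $\hat p$ being an isomorphism gives $\mathcal Q/\varpi\mathcal Q=0$, so $\mathcal Q=\varpi\mathcal Q=\cdots=0$. Thus $p$ is an isomorphism, and the composite $X\xrightarrow{p^{-1}}\Gamma\hookrightarrow X\times_R Y\to Y$ is a morphism $f$ whose generic fiber is $f_\eta$ and whose completion is $\hat f$, yielding both the extension and the more precise compatibility in the statement.
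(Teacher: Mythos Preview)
Your argument is correct, but it takes a different route than the paper's. The paper dispatches the lemma in one stroke by citing admissible formal GAGA for \emph{morphisms}, namely that for proper $R$-schemes of finite presentation the completion map $\Mor(X,Y)\to\Mor(\hat X,\hat Y)$ is a bijection~\cite[Proposition~10.3.1]{fujiwara_kato14:fgrI}; combined with rigid GAGA on generic fibers, this gives the commutative square
\[\xymatrix @R=.25in{
  {\Mor(X,Y)} \ar[r] \ar[d]_\cong & {\Mor(X_\eta,Y_\eta)} \ar[d]^\cong \\
  {\Mor(\hat X,\hat Y)} \ar[r] & {\Mor(X_\eta^\an, Y_\eta^\an)}
}\]
with bijective vertical arrows, and the lemma follows immediately. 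You instead unpack this by passing to graphs, invoking Grothendieck existence for closed subschemes of $X\times_R Y$, and then arguing directly (via finiteness, $R$-flatness, and a Nakayama-type step) that the algebraized graph projects isomorphically onto $X$. This is exactly how one proves the Mor-version of formal GAGA from the coherent-sheaf/closed-subscheme version, so the two proofs rest on the same underlying input; the paper's is shorter because it cites the packaged statement, while yours is more transparent about what properness is buying and where the non-noetherian subtlety lies. Your final paragraph, showing $p$ is an isomorphism from $\hat p$ and $p_\eta$ being isomorphisms, is a nice self-contained argument that avoids a second appeal to GAGA; note that it only needs $Q$ finitely generated (not coherent), which is automatic since $p$ is finite.
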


\begin{proof}
  Consider the following commutative square:
  \[\xymatrix @R=.25in{
    {\Mor(X,Y)} \ar[r] \ar[d]_\cong & {\Mor(X_\eta,Y_\eta)} \ar[d]^\cong \\
    {\Mor(\hat X,\hat Y)}
    \ar[r] & {\Mor(X_\eta^\an, Y_\eta^\an)} }\]
  All arrows arise from functoriality of the operations of completion,
  analytification, and passing to the analytic generic fiber, along with the
  identifications $X_\eta^\an\cong\hat X_\eta$ and $Y_\eta^\an\cong\hat Y_\eta$.
  The left vertical arrow is a bijection by admissible formal
  GAGA~\cite[Proposition~10.3.1]{fujiwara_kato14:fgrI}, and the right vertical
  arrow is a bijection by rigid analytic GAGA.  This proves the lemma.
\end{proof}

\subsection{Formal analytic varieties}
For our purposes, we define a \defi{formal analytic variety} over $R$ to be a
separated admissible formal $R$-scheme $X$ with reduced special fiber $X_s$.
We regard the analytic generic fiber $X_\eta$ as a rigid space over $K$ as
above, and we regard the special fiber $X_s$ as a variety over $k$ and not
as a scheme---that is, we only consider the closed points of $X_s$.  With
these conventions, there exists a canonical, functorial, surjective
\defi{reduction map}, which we denote by $\pi\colon X_\eta\to X_s$.  A
\defi{formal fiber} of $X$ is the $\pi$-inverse image of a (closed) point of
$X_s$.  It is known that for every formal affine $U = \Spf(A)\subset X$, the
generic fiber $U_\eta = \Sp(A_{K})\subset X_\eta$ is an affinoid domain with
underlying set $\pi^{-1}(U_s)$, and that $A$ coincides with the ring
$A_{K}^\circ$ of power-bounded elements in the affinoid algebra $A_{K}$.  See
for example~\cite[\S1]{bosch_lutkeboh85:stable_reduction_I}.  In this situation
we say $\Spf(A)$ is the \defi{canonical model} of $\Sp(A_{K})$.  It follows
that $X$ is obtained by gluing the canonical models of a number of affinoid
domains in $X_\eta$.

We have the following useful criterion for a finite morphism of rigid spaces to
extend to a morphism of formal analytic varieties.

\begin{prop}\label{prop:analytic.extension.criterion}
  Let $X$ and $Y$ be formal analytic varieties over $R$, and let
  $f_\eta\colon X_\eta\to Y_\eta$ be a finite morphism.  Then $f_\eta$ extends to a
  morphism of formal analytic varieties $f\colon X\to Y$ if and only if there
  exists a set-theoretic map $f_s\colon X_s\to Y_s$ making the following square
  commute: 
  \[\xymatrix @R=.2in{ {X_\eta} \ar[r]^{f_\eta} \ar[d]_\pi &
    {Y_\eta} \ar[d]^\pi \\
    {X_s} \ar[r]_{f_s} & {Y_s}
  }\]
  In this case, $f_s$ coincides with the special fiber of $f$.
\end{prop}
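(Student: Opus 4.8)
The plan is to treat the two implications separately; the forward direction is formal, while the converse rests on the reduction theory of formal analytic varieties. For the forward direction, if $f_\eta$ extends to $f\colon X\to Y$, I would simply take $f_s$ to be the special fiber of $f$: functoriality of the reduction map gives $\pi\circ f_\eta=f_s\circ\pi$, so the square commutes. The final assertion, that any extension has special fiber $f_s$, follows from the same identity together with the surjectivity of $\pi\colon X_\eta\to X_s$, since two set-theoretic maps $X_s\to Y_s$ agreeing after precomposition with a surjection must coincide.

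For the converse, the strategy is to construct $f$ by gluing local pieces over a formal affine cover of $Y$. First I would choose a cover of $Y$ by formal affine opens $V_j=\Spf(B_j)$, so that the $(V_j)_s$ are affine opens covering $Y_s$ and each generic fiber is the saturated affinoid subdomain $(V_j)_\eta=\pi^{-1}((V_j)_s)=\Sp((B_j)_K)$ with $B_j=(B_j)_K^\circ$. Since $f_\eta$ is finite, the preimage $U_j:=f_\eta^{-1}((V_j)_\eta)$ is again affinoid, say $U_j=\Sp(C_{jK})$ with $C_{jK}$ finite over $(B_j)_K$. The commutativity of the square is exactly what forces each $U_j$ to be a union of formal fibers of $X$: indeed $U_j=f_\eta^{-1}(\pi^{-1}((V_j)_s))=\pi^{-1}(f_s^{-1}((V_j)_s))$.

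The hard part will be connecting the affinoid $U_j$ back to the formal structure of $X$, namely showing that the canonical model $\Spf(C_{jK}^\circ)$ of such a saturated affinoid subdomain is an open formal subscheme of $X$, with special fiber the open subvariety $W_j:=f_s^{-1}((V_j)_s)\subset X_s$. This is the crux, and it is precisely where saturation is indispensable: the canonical model of a non-saturated affinoid (for instance a disc of radius $<1$ inside the closed unit disc) is a genuinely different $R$-model, not an open subscheme. I would deduce the claim from the reduction theory of~\cite[\S1]{bosch_lutkeboh85:stable_reduction_I}, under which saturated affinoid subdomains of $X_\eta$ correspond bijectively to affine Zariski opens of $X_s$ via $\pi$; finiteness of $f_\eta$ enters only to guarantee that the $U_j$ are affinoid in the first place. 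Granting this, the open formal subschemes $X_j:=\Spf(C_{jK}^\circ)$ form a cover of $X$.

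It then remains to produce and glue the local morphisms. The restriction of $f_\eta$ corresponds to a continuous $K$-algebra homomorphism $(B_j)_K\to C_{jK}$, and any continuous homomorphism carries power-bounded elements to power-bounded elements; hence it restricts to $B_j=(B_j)_K^\circ\to C_{jK}^\circ$ and defines $f_j\colon X_j\to V_j$ with generic fiber $f_\eta|_{U_j}$. Finally, because each formal affine piece of $X$ is $R$-flat and $Y$ is separated, a morphism is determined by its (dense) generic fiber; so on overlaps the $f_j$ agree, as they all restrict to $f_\eta$, and glue to the desired $f\colon X\to Y$, whose generic and special fibers are $f_\eta$ and $f_s$.
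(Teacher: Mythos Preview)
Your argument is correct and runs parallel to the paper's, but the organization differs in one respect worth noting. The paper does not attempt to realize the canonical model of $U_j=f_\eta^{-1}((V_j)_\eta)$ as an open formal subscheme of $X$. Instead it first proves that $f_s$ is continuous: for any formal affine $U'\subset X$, the intersection $U'_\eta\cap U_j$ is an affinoid subdomain of $U'_\eta$ which is a union of formal fibers, and \cite[Lemma~5.7]{bosch_lutkeboh84:stable_reduction_II} then says this is $\pi^{-1}$ of a Zariski-open in $U'_s$. Once $f_s$ is continuous, the paper works directly with the \emph{given} formal affines $U=\Spf(A)\subset X$ satisfying $U_s\subset f_s^{-1}(V_s)$ and produces $B\to A$ via power-boundedness, exactly as you do. Your route packages the same content as ``saturated affinoid subdomains correspond to affine Zariski opens of $X_s$,'' which is true but is not quite stated in \cite[\S1]{bosch_lutkeboh85:stable_reduction_I}; the actual input you need is the local Lemma~5.7 just cited, applied on each formal affine chart of $X$, and then a gluing check that $\Spf(C_{jK}^\circ)$ agrees with $X$ restricted to $W_j$. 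The paper's organization sidesteps that last check by never leaving the original atlas of $X$; your organization has the virtue of making the role of saturation most transparent.
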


In other words, $f_\eta$ extends to a morphism of formal analytic varieties if and only if it takes formal fibers of $X$ into formal fibers of $Y$.  It will be clear from the proof that the extension $f$ is unique if it exists.  Note that the morphism $f_s$ need not be finite; for instance, $f_\eta$ could be the identity, and $f$ an admissible formal blow-up on the special fiber.

\begin{proof}
  The stated condition is necessary by functoriality of the reduction map.
  Suppose now that such $f_s$ exists.  First we claim that $f_s$ is continuous.
  Let $V\subset Y$ be a formal affine, and let $U_s = f_s^{-1}(V_s)$ and
  $U_\eta = f_\eta^{-1}(V_\eta) = \pi^{-1}(U_s)$.  Note that $U_\eta$ is affinoid
  since $f_\eta$ is finite.  Let $U'\subset X$ be a formal affine.  Then
  $U'_\eta\cap U_\eta$ is an affinoid subdomain of $U'_\eta$ which is a union of
  formal fibers.  It follows
  from~\cite[Lemma~5.7]{bosch_lutkeboh84:stable_reduction_II} that
  $U'_\eta\cap U_\eta$ is the $\pi$-inverse image of a Zariski-open subset of
  $U'_s$, so that $U_s'\cap U_s$ is open.  As $U'$ was arbitrary,
  this establishes continuity of $f_s$.

  Let $V = \Spf(B)\subset Y$ be a formal affine and let $U = \Spf(A)$ be a
  formal affine with $U_s\subset f_s^{-1}(V_s)$.  Then
  $U_\eta\subset f_\eta^{-1}(V_\eta)$, so $f_\eta$ induces a homomorphism
  $\phi\colon B_{K}\to A_{K}$.  A homomorphism of affinoid algebras is automatically
  continuous, so $\phi$ takes $B = B_{K}^\circ$ into $A = A_{K}^\circ$, and thus
  induces a morphism $U\to V$.  This construction is compatible with
  restriction, so it glues to a map $f\colon X\to Y$.
\end{proof}

\begin{rem}
  A formal analytic variety $X$ may be regarded as a ringed space with underlying set
  $X_\eta$, whose open sets are the $\pi$-inverse images of Zariski-open subsets
  of $X_s$, and whose structure sheaf when evaluated on $\pi^{-1}(U_s)$ is the
  ring of power-bounded analytic functions on $U_\eta$ for a formal affine
  $U\subset X$.  See \cite[\S1]{bosch_lutkeboh85:stable_reduction_I}.
  The second paragraph of the proof of
  Proposition~\ref{prop:analytic.extension.criterion} is the simple
  fact that a morphism of rigid spaces $f_\eta\colon X_\eta\to Y_\eta$ extends to a
  morphism $f\colon X\to Y$ if and only if $f_\eta$ is continuous with respect to
  the corresponding formal analytic topologies.
\end{rem}

\section{Jacobians with good reduction}
We return to the notations of the introduction (still assuming $K$ complete and algebraically closed).  In particular, $C$ is a compact-type curve over $R$, and $J = \Pic^0_{C/R}$.  The special fiber $J_s$ is smooth, hence reduced, so we may view the completion $\hat J$ as a formal analytic variety.

The following result of Bosch--L\"utkebohmert is the main geometric input into
the proof of Theorem~\ref{thm:extend.abel.jacobi}.  

\begin{prop}\label{prop:annulus.into.formal.fiber}
  Let $U$ be an analytic open disc or open annulus.  Then the image of any
  morphism $f\colon U\to {J}_\eta^\an$ is contained in a single formal fiber.
\end{prop}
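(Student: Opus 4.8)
The statement says any analytic map from an open disc or annulus into $J_\eta^{\mathrm{an}}$ lands in a single formal fiber. The natural strategy is to use the structure of $J_\eta^{\mathrm{an}}$ as the analytification of an abelian variety with good reduction, and to exploit the rigidity of discs and annuli. The key idea is that discs and annuli are simply connected (in the appropriate analytic sense), and the reduction map $\pi\colon J_\eta^{\mathrm{an}}\to J_s$ is, locally, close to a fibration whose fibers are the formal fibers. So the plan is: first reduce to showing the composite $\pi\circ f\colon U\to J_s$ is constant; then show any such composite is constant because $U$ has no nonconstant maps to the reduction.

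**First steps.** First I would recall that $J$ has good reduction, so $\hat J$ is a formal analytic variety with smooth (hence reduced) special fiber $J_s$, an abelian variety over $k$. The formal fibers of $\hat J$ are the $\pi$-preimages of closed points of $J_s$, and each is isomorphic to an open polydisc (this is the content of good reduction: the formal fiber over a point is the generic fiber of the formal completion at that point, which for a smooth formal scheme is an open polydisc). Thus proving the proposition amounts to showing that $\pi\circ f\colon U\to J_s$ is a constant map to a single closed point.

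**The main input.** The crux is the Homotopy Theorem of Bosch--L\"utkebohmert, cited as~\cite[Theorem~3.5]{bosch_lutkeboh84:stable_reduction_II}: any analytic morphism from a disc or annulus to the analytification of an abelian variety with good reduction factors, up to the reduction, through a point. Concretely, I expect the argument to run as follows. An open disc $U$ has no nonconstant invertible functions with interesting reduction behavior, and more importantly any map $U\to J_\eta^{\mathrm{an}}$ can be analyzed via its effect on the formal-analytic structure. The Homotopy Theorem controls precisely how discs and annuli sit inside abelian varieties: it guarantees that the reduction of the image is a point, equivalently that the composite with $\pi$ is constant. For the annulus case one must rule out the map winding around a nontrivial cycle in the special fiber, but since $J_s$ is an abelian variety with good reduction (not a semi-abelian reduction with a toric part), there is no such cycle for an annulus to wrap around---this is exactly where good reduction is essential and where the Homotopy Theorem delivers the conclusion.

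**The main obstacle.** The hardest part will be correctly invoking and interpreting the Homotopy Theorem so that it yields ``image in a single formal fiber'' rather than merely a weaker homotopy-invariance statement. I would need to check that the theorem applies to maps from discs and annuli (not just from the open unit disc), and to translate its conclusion---phrased in terms of the canonical reduction or in terms of homotopy classes---into the statement that $\pi\circ f$ is constant. The annulus case is the delicate one: a priori a map from an annulus could have nonconstant reduction even if a disc cannot, so I would need the Homotopy Theorem specifically to forbid an annulus from mapping onto a nonconstant (necessarily rational, hence trivial in an abelian variety) curve in $J_s$. Once the composite $\pi\circ f$ is shown to be constant, the proposition follows immediately, since its being constant means the image of $f$ lies in one formal fiber.
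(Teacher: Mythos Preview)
Your approach is essentially the paper's: the proposition is deduced directly from results in Bosch--L\"utkebohmert, and your reformulation ``$\pi\circ f$ is constant'' is exactly the conclusion. Two points of precision are worth noting. First, the paper cites Propositions~5.11 and~5.13 of \cite{bosch_lutkeboh84:stable_reduction_II} rather than the Homotopy Theorem~3.5 itself; these propositions are the ready-made consequences (for affinoid discs and affinoid annuli, respectively) that say the image lies in a single formal fiber of $J$, so no further translation from a homotopy statement is needed. Second, those propositions are stated for \emph{affinoid} discs and annuli, whereas $U$ here is an \emph{open} disc or annulus; the paper bridges this by observing that an open disc (resp.\ annulus) is an increasing union of affinoid discs (resp.\ annuli), and a nested union of subsets each contained in a single formal fiber is itself contained in a single formal fiber. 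You anticipated a domain-of-applicability issue (``I would need to check that the theorem applies to maps from discs and annuli''), and this exhaustion step is precisely how it is handled. Your heuristic discussion of polydisc fibers and annuli wrapping cycles is correct intuition but is not needed once 5.11 and 5.13 are invoked.
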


\begin{proof}
  This follows from~\cite[Propositions~5.11 and~5.13]{bosch_lutkeboh84:stable_reduction_II}, as an open disc (resp.\ annulus) is an increasing union of affinoid discs (resp.\ annuli).
\end{proof}

\begin{proof}[Proof of Theorem~\ref{thm:extend.abel.jacobi}, step 1]
  At this point we can  show that $C_\eta\to J_\eta$ extends to a morphism $C\to J$; we will prove below that this morphism is a closed immersion when $C_s$ has no rational components.  By Lemma~\ref{lem:extend.after.completion}, we may replace $C$ and $J$ by their completions to work in the category of formal analytic varieties.  The theorem then follows from Propositions~\ref{prop:analytic.extension.criterion} and~\ref{prop:annulus.into.formal.fiber} because all formal fibers of $C$ are open discs and open annuli, by~\cite[Propositions~2.2 and~2.3]{bosch_lutkeboh85:stable_reduction_I}.
\end{proof}

\begin{rem*}
  The above argument actually shows that a finite morphism from $C_\eta$ to any Jacobian $J$ with good reduction extends to integral models.  One can replace $J$ with any abelian variety $A$ with good reduction if Proposition~3.1 holds for $A$; it is not clear if this is true, however.
\end{rem*}

Now that we know it exists, we can describe the behavior of the extension $C\to J$ on special fibers.  As passing to the completion of an algebraic closure of the completion of a field does not affect special fibers, the conclusions of Proposition~\ref{prop:AJ.special.fiber} are valid when $K$ is not complete and algebraically closed.

\begin{prop}\label{prop:AJ.special.fiber}
  Let $\iota\colon C\to J$ be the extension of an Abel--Jacobi map on the generic fiber, as provided by Theorem~\ref{thm:extend.abel.jacobi}.  Let $\iota_v\colon C_v\to J_s$ be the restriction of $\iota_s$ to the component $C_v$.  Under the identification $J_s = \prod_v J(C_v)$, for any component $C_w$, the morphism
  \[ \iota_w\colon C_w\to\prod_v J(C_v) \quad\text{is }
    \begin{cases}
      \text{zero} & \text{in the $v$-coordinate for $v\neq w$, and} \\
      \text{an Abel--Jacobi map} & C_w\inject J(C_w) \text{ in the $w$-coordinate,}
    \end{cases}\]
  up to translation on $J_s$.
\end{prop}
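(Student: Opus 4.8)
The plan is to analyze how the extension $\iota\colon C\to J$ behaves on special fibers by exploiting the functoriality of the construction and the explicit decomposition $J_s = \prod_v J(C_v)$. We already know $\iota$ exists, so $\iota_s\colon C_s\to J_s$ is an honest morphism of $k$-varieties, and restricting to a component $C_w$ gives $\iota_w\colon C_w\to \prod_v J(C_v)$. The first step is to understand the generic fiber geometrically: the Abel--Jacobi map $\iota_\eta$ sends a point $P$ to the class of the divisor $[P]-[P_0]$, where $P_0$ is the base point. Under the identification $J_s\cong\prod_v J(C_v)$ coming from the inclusions $C_v\hookrightarrow C_s$, the $v$-coordinate of the Picard class of a divisor $D$ supported on $C_s$ is governed by the multidegree and by the restriction of the corresponding line bundle to $C_v$.

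Next I would trace through what happens on the special fiber. For a point $P\in C_w(K)$ reducing into the smooth locus of $C_w$, the reduction of $\iota_\eta(P)$ should be computable directly: the divisor $[P]-[P_0]$ specializes to a divisor on $C_s$, and its class in $\Pic^0_{C_s/k}=\prod_v J(C_v)$ is determined componentwise. Since $[P]$ reduces onto $C_w$ and $[P_0]$ onto the component containing the base point, the line bundle $\mathcal{O}([P]-[P_0])$ restricts trivially (degree zero, and in fact trivial up to the fixed base-point contribution) to every component $C_v$ with $v\neq w$, once we account for the compact-type structure. This is precisely why removing nodes disconnects $C_s$: the components meet in tree-like fashion, so a degree-zero-on-each-component line bundle restricts to the trivial class on components not carrying the moving point. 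Thus the $v$-coordinate for $v\neq w$ is constant, i.e.\ zero up to the global translation by the base-point contribution, while the $w$-coordinate is the Abel--Jacobi map $C_w\to J(C_w)$ sending $P\mapsto[P]-[P_0']$ for an appropriate base point $P_0'$ on $C_w$.

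I would make this rigorous by working with the Picard functor directly rather than pointwise on $K$-points. Concretely, $\iota_s$ is the map classifying a line bundle on $C_w\times_k C_s$ (the universal object pulled back along $\iota$), and projecting to the $v$-factor amounts to restricting this bundle to $C_w\times_k C_v$; I want to identify this restriction as classifying either the trivial family (for $v\neq w$) or the Poincar\'e-type family giving the Abel--Jacobi embedding (for $v=w$). The compact-type hypothesis enters through the fact that $\Pic^0$ of a tree of components splits as the product of the $\Pic^0$'s with no extra semiabelian part, so there is no mixing between components.

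The main obstacle I expect is pinning down the \emph{translation} ambiguity and verifying that the $w$-coordinate is genuinely an Abel--Jacobi map rather than merely \emph{some} map into $J(C_w)$. The difficulty is that the extension $\iota$ was constructed abstractly via the fiberwise rigid-analytic criterion, not by an explicit modular formula, so I cannot simply read off $\iota_s$ from a divisor-class recipe. The cleanest route is probably to observe that $\iota_s$ is a morphism of varieties agreeing with the expected Abel--Jacobi behavior on a Zariski-dense set of points, namely those reducing into the smooth locus of a single component, where the reduction of $\iota_\eta$ can be computed; then rigidity of morphisms to abelian varieties (any two morphisms agreeing on a dense subset and differing by a map to an abelian variety differ by a constant) forces agreement up to translation on all of $C_w$. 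Handling the nodes and confirming that the nodal points map consistently—so that $\iota_s$ is well-defined across the gluing—is the delicate point, but it follows from continuity of $\iota_s$ together with the explicit description on the smooth loci.
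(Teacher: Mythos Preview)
Your proposal is correct and follows essentially the same route as the paper: compute $\iota_s$ on the Zariski-dense smooth locus $C_w^{\sm}$ by reducing the divisor $[P]-[P_0]$ and reading off its class componentwise, then extend to all of $C_w$ by density. The paper streamlines the argument in two ways you might adopt: it first translates so that the base point $P_0$ reduces to a smooth point of $C_w$ itself (so that both $\tilde P$ and $\tilde P_0$ lie on $C_w$, making the vanishing of the $v$-coordinate for $v\neq w$ immediate), and it invokes Bosch--L\"utkebohmert's Theorem~5.1(c) directly for the statement that the reduction of $[P-P_0]$ has the expected componentwise description, rather than arguing this from scratch via the Picard functor.
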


\begin{proof}
  This is a statement about smooth varieties over an algebraically closed field, so it may be checked on closed points.  After translation, we may assume that $\iota_\eta$ is the Abel--Jacobi map defined by a base point $P_0\in C(K)$ whose reduction $\td P_0\in C(k)$ is a smooth point of $C_w$.  For any other point $P\in C(K)$ whose reduction $\td P$ is in $C_w^\sm$, the reduction $\iota_w(\td P)$ of $\iota_\eta(P) = [P-P_0]$ has $w$-coordinate $[\td P-\td P_0]\in J(C_w)$ and has $v$-coordinate zero for $v\neq w$; see~\cite[Theorem~5.1(c)]{bosch_lutkeboh84:stable_reduction_II}.  Since every point of $C_w^\sm$ is the reduction of some point $P\in C(K)$, this shows that $C_w^\sm\to J(C_w)$ is the restriction of an Abel--Jacobi map, and that $C_w^\sm\to J(C_v)$ is zero for $v\neq w$.  The assertion follows because $C_w^\sm$ is dense in $C_w$.
\end{proof}

\begin{proof}[Proof of Theorem~\ref{thm:extend.abel.jacobi}, step 2]
  It remains to prove that $\iota\colon C\to J$ is a closed immersion when $C_s$ has no rational components.  First we show that $\iota$ is injective on closed points.  Fix an irreducible component $C_w$ of $C_s$, and translate $\iota$ so that $\iota_w\colon C_w\to\prod_v J(C_v)$ is as described in Proposition~\ref{prop:AJ.special.fiber}.  In particular, the $v$-coordinate of $\iota_w(\td P)$ is zero for $\td P\in C_w$ and $v\neq w$.  We claim that this condition characterizes $C_w$: i.e., that if $\td P\in C(k)$ and $\iota_s(\td P)$ has zero $v$-coordinate for all $v\neq w$, then $\td P\in C_w$.  This is enough to prove injectivity.

  Let $\td P\in C_{w'}(k)$ for $w\neq w'$.  Since the incidence graph of $C_s$ is a tree, there is a unique chain of components $C_w = C_{w_0},\,C_{w_1},\ldots,C_{w_r}=C_{w'}$ connecting $C_w$ and $C_{w'}$.  Suppose first that $r=1$, so that $C_w$ intersects $C_{w'}$.  Since $\td P\notin C_w$ and since $\iota_{w_1}$ is injective, we have that the $w_1$-coordinate of $\iota_s(\td P)$ is nonzero, as claimed.  If $r > 1$, then let $\td P'$ be the point where $C_{w_1}$ and $C_{w_2}$ intersect.  Then $\td P'\notin C_w$, so the $w_1$-coordinate of $\iota_s(\td P')$ is nonzero, as above.  But $\iota_{w_i}$ has constant $w_1$-coordinate for $i > 1$, so the $w_1$-coordinates of $\iota_s(\td P)$ and $\iota_s(\td P')$ coincide.  This completes the proof of injectivity.

  Since $\iota$ is quasi-finite and proper (as $C$ is proper), it is finite.  We now pass to formal completions to prove it is a closed immersion.  Both $\hat C$ and $\hat J$ are formal analytic varieties, and $\hat\iota\colon\hat C\to\hat J$ is an affine morphism.  By~\cite[Proposition~6.4.2/1]{bosch_guntzer_remmert84:non_archimed_analysis}, it is enough to prove that $\iota_s$ is a closed immersion.  This is a statement about a morphism of varieties over an algebraically closed field; we already know that $\iota_s$ is injective on points, so it suffices to show it is injective on tangent vectors.  This is clear for smooth points of $C_s$ by Proposition~\ref{prop:AJ.special.fiber}.  Let $\td P\in C(k)$ be a singular point, and let $C_v,C_w$ be the irreducible components containing $\td P$.  The tangent space at $\td P$ is two-dimensional, generated by tangent vectors in the $C_v$- and $C_w$-directions.  But these map to tangent vectors with zero $J(C_w)$- and $J(C_v)$-components, respectively, so their images under $\iota_s$ are linearly independent.
\end{proof}

\bibliographystyle{egabibstyle}
\bibliography{papers}

\end{document}